\newtheorem{theorem}{Theorem}[section]
\newtheorem{lemma}[theorem]{Lemma}
\newtheorem{definition}[theorem]{Definition}
\newtheorem{example}[theorem]{Example}
\newtheorem{remark}[theorem]{Remark}
\newcommand{\R}{\mathbb R}
\newcommand{\Q}{\mathbb Q}
\newcommand{\F}{\mathbb F}
\begin{document}

\title[Multi-$\mathcal{K}$-Lipschitz Equivalence in dimension two.
 ]{Multi-$\mathcal{K}$-Lipschitz Equivalence in dimension two
}

\author[]{Lev Birbrair*}\thanks{*Research supported under CNPq 302655/2014-0 grant and by Capes-Cofecub}
\address{Lev Birbrair: Departamento de Matem\'atica, Universidade Federal do Cear\'a
(UFC), Campus do Pici, Bloco 914, Cep. 60455-760. Fortaleza-Ce, Brasil and \newline Institute of Mathematics, Jagiellonian University, Profesora Stanisława Łojasiewicza 6, 30-348 Kraków, Poland} \email{lev.birbrair@gmail.com}

\author[]{Rodrigo Mendes}
\address{Rodrigo Mendes: Instituto de ci\^encias exatas e da natureza, Universidade de Integra\c{c}\~ao Internacional da Lusofonia Afro-Brasileira (unilab), Campus dos Palmares, Cep. 62785-000. Acarape-Ce, Brasil and \newline Departament of Mathematics, Ben Gurion University of the Negev, P.O.B. 653, Be'er Sheva 84105, Israel.} 
\email{rodrigomendes@unilab.edu.br/mendespe@post.bgu.ac.il}

\keywords{width}
\subjclass[2010]{14B05; 32S50 }

\begin{abstract}
In this paper, we study Multi-$\mathcal{K}$-equivalence of multi-germs of functions on the plane, definable in a polynomially bounded o-minimal structure.  As in \cite{birbrair2014lipschitz}, we partition the germ of the plane at origin into zones of arcs in such a way that it produces a non-Archimedean space (set of orders and width functions) compatible with a given multigerm, encoding its asymptotic behaviour. Such a partition is called Multipizza. We show the existence, uniqueness and complete invariance of Multipizzas with respect to the Multi-$\mathcal{K}$-Lipschitz equivalence.
\end{abstract}

\maketitle

\section{Introduction}

The paper can be considered as continuation the investigation initiated in \cite{K-Lip}, where the authors proved that the classification of the polynomial functions with respect to the  $\mathcal{K}$-Lipschitz equivalence is tame, i.e. has no moduli. The results from \cite{birbrair2014lipschitz} present a complete classification of the Lipschitz semialgebraic function germs in two-dimensional case. The next question is the classification of the map germs, not only the function germs. The finiteness theorem was proved by Valette and Ruas \cite{ruas&valette} (see also \cite{comEdvalter2}). The problem of multi-$\mathcal{K}$-Lipschitz equivalence is situated between the classification of function-germs and the map-germs.

The invariant constructed in \cite{birbrair2014lipschitz} is called Pizza. It is a finite partition of the germ of the plane into H\"older triangles, elementary with respect to the asymptotic behaviour of the function near the singular point. There the authors constructed a so-called width function, describing this behaviour. An elementary sub-triangle, or in other words pizza triangle is a triangle, such that the width-function is affine.  In this paper we define so-called Multipizza. As in \cite{birbrair2014lipschitz} in order to construct the pizza and the multipizza, we consider the Valette link of the plane, i.e. the set of germs of definable arcs, parameterized by the distance to the origin. The Valette link, equipped with the order of contact of arcs can be considered as a non-Archimedean metric space.
The Pizza of each function germ defines a partition of the Valette link of the plane. If we consider a multi-germ, we construct a multipizza as a minimal common refinement of a subset of partitions of the Valette link of the plane. We prove that the multipizza is a complete invariant with respect to the multi-$\mathcal{K}$-Lipschitz equivalence.

\section{Definitions and basic concepts}

All sets, functions and maps in this paper are assumed to be definable in a polynomially bounded
o-minimal structure over $\R$ with the field of exponents $\F$.
The simplest (and most important in applications) examples of such structures are real semialgebraic and globally subanalytic sets, with ${\F}={\Q}$.

An \emph{arc} in $\R^n$ is a germ at the origin of mapping $\gamma:[0,\epsilon)\rightarrow \R^n$ such that $\gamma(0)=0$. Unless otherwise specified, we suppose that arcs are parametrized by the distance to the origin, i.e., $\|\gamma(t)\|=t$.

Given two subsets $X,Y \subset \R^n$, a map $F\colon X\rightarrow Y$ is called Lipschitz when there exists a constant $C\ge 1$ such that $\|F(x_1)-F(x_2)\|\leq C\|x_1-x_2\|, \ \forall x_1, x_2 \in X$.
If the map $F$ is bijective and $\frac{1}{C}\|x_1-x_2\|\leq \|F(x_1)-F(x_2)\|\leq C\|x_1-x_2\|$ for some $C\ge 1$, we say that $F$ is a bi-Lipschitz map and that $X$ and $Y$ are bi-Lipschitz equivalent.
\begin{definition}\label{ordonarc}
\emph{Let $f\not\equiv 0$ be (a germ at the origin of) a function defined on an arc $\gamma$.
The order $\alpha$ of $f$ on $\gamma$ (notation $\alpha=ord_\gamma f$ or  $\alpha=ord_t f(\gamma(t))$) is the value $\alpha\in\F$ such that $f(\gamma(t))=c t^{\alpha}+o(t^{\alpha})$ as $t\to 0$, where $c\ne 0$.
If $f\equiv 0$ on $\gamma$, we set $ord_\gamma f=\infty$.}
\end{definition}

\begin{definition}\label{tord}
\emph{ The \emph{order of tangency} of two arcs $\gamma$ and $\gamma'$ (notation $tord(\gamma,\gamma')$)
is defined as $ord_t\|\gamma'(t)-\gamma(t)\|$.
The arcs $\gamma$ and $\gamma'$ are \emph{tangent} if $tord(\gamma,\gamma')>1$.
 }
\end{definition}

\begin{definition}\label{holder}
\emph{Two arcs $\gamma_1\ne\gamma_2$ in $\R^2$ divide the germ of $\R^2$ at the origin into two components. If $\beta=tord(\gamma_1,\gamma_2)>1$ then the closure of the smaller (not containing a half-plane) component is called a \emph{$\beta$-H\"older triangle}. If $tord(\gamma_1,\gamma_2)=1$ then the closure of any of the two components is called a \emph{$1$-H\"older triangle}. The number $\beta \in \F_+$ is called the \emph{exponent} of the H\"older triangle. We denote by $T(\gamma_1,\gamma_2)$ a H\"older triangle bounded by $\gamma_1$ and $\gamma_2$. The arcs $\gamma_1$ and $\gamma_2$ are called its \emph{boundary arcs}. Any germ $(U,0)$ of a definable set $U\subset\R^n$ bi-Lipschitz equivalent to a $\beta$-H\"older triangle $T\subset\R^2$ is also called a $\beta$-H\"older triangle.}
\end{definition}

\section{Zones and pizza decomposition}
\begin{definition}\label{zone}
\emph{Let $Z$ be a set of arcs. We say that $Z$ {\em contains} a H\"older triangle $T$ (notation $T\sqsubset Z$) if any arc $\gamma\subset T$ belongs to $Z$.
A set of arcs $Z$ is called a \emph{zone} if for any two arcs $\gamma_1$ and $\gamma_2$ of $Z$ there exists a H\"older triangle $T=T(\gamma_1,\gamma_2)\sqsubset Z$.}
\end{definition}

\begin{example}\label{zones}

\emph{(i) The set $\{\gamma\}$ consisting of a single arc $\gamma$ is a zone. Such a zone is called \emph{singular}.}

\emph{(ii) For a H\"older triangle $T$, the set $Z_T$ of all arcs $\gamma\subset T$ is a zone.}

\emph{(iii) For a H\"older triangle $T$, the set of all arcs belonging to the interior of $T$ is a zone.}

\emph{(iv) For a $\beta$-H\"older triangle $T$, the set $Z^\circ_T$ of all arcs $\gamma\subset T$ having the order of tangency $\beta$
with both boundary arcs of $T$ is a zone.}

\emph{(v) For a $\beta$-H\"older triangle $T=T(\gamma_1,\gamma_2)$, the set of all arcs $\gamma\subset T$ having the order of tangency greater than $\beta$ with its boundary arc $\gamma_1$ is a zone.}

\end{example}

\begin{definition}
\emph{The \emph{order} $\mu(Z)$ of a zone $Z$ is the infimum of $tord(\gamma_1,\gamma_2)$, where the arcs $\gamma_1$ and $\gamma_2$ belong to $Z$.
If $Z$ is a singular zone then $\mu(Z)=\infty$.}
\end{definition}

\begin{definition}\label{QZf}
\emph{For a zone $Z$ and a continuous function $f$ defined on each $\gamma \in Z$, let $Q_{Z,f}$ be the set of values of $ord_{\gamma} f$ for all arcs $\gamma \in Z$. Then $Q_{Z,f}$ is either a point or a segment (open, closed or semi-open) in $\F_+ \cup \infty$. When $Q_{Z,f}=\{q\}$ is a point, we call $Z$ a $q$-zone for $f$, or simply $q$-zone when $f$ is clear from the context. A $q$-zone is \emph{maximal} if it is not a proper subset of a larger $q$-zone for $f$.
}
\end{definition}

\begin{definition}\emph{
A zone $Z$ is \emph{closed} if there are two arcs $\gamma_1$ and $\gamma_2$ in $Z$ such that $tord(\gamma_1,\gamma_2)=\mu(Z)$.}
\end{definition}

\begin{remark}
\emph{In example \ref{zones}, the zones in (i)-(iv) are closed while the zone in (v) is not.}
\end{remark}

\begin{definition}\label{Zgamma}
\emph{
Let $f$ be a continuous function defined on a H\"older triangle $T$. If $\gamma$ is an arc in $T$ and $q=ord_\gamma f$, the maximal $q$-zone for $f$ containing $\gamma$ is denoted by $Z_{\gamma}f$. It follows from \cite[Remark 3.4]{birbrair2014lipschitz} that $Z_{\gamma}f$ is a closed zone.}
\end{definition}

\begin{definition}\label{generic}
\emph{An arc $\gamma$ is called \emph{generic} with respect to a (closed) zone $Z$ if there exists a $\beta$-H\"older triangle $T\sqsubset Z$,
where $\beta=\mu(Z)$, such that $\gamma\in Z^\circ_T$. A closed zone $Z$ is called \emph{perfect} if any arc in $Z$ is generic.}
\end{definition}

\begin{remark}
\emph{In example \ref{zones}, the zones in (i) and (iv) are perfect while the zones in (ii) and (iii) are not.}
\end{remark}

\begin{definition}\emph{
A zone $Z$ is called a \emph{monotonicity zone} for $f$ if the set of arcs $\gamma\subset Z$ such that $ord_\gamma f=q$ is a zone for any $q\in Q_{Z,f}$.}
\end{definition}

\begin{remark} \emph{A H\"older triangle $T$ is called \emph{elementary} with respect to a function $f$ defined on $T$ (see \cite[Definition 2.7]{birbrair2014lipschitz}) if, for any two distinct arcs $\gamma_1$ and $\gamma_2$ in $T$ such that $\rm{ord}_{\gamma_1}(f)=\rm{ord}_{\gamma_2}(f)=q$,
the order of $f$ is equal to $q$ on any arc in the H\"older triangle $T(\gamma_1,\gamma_2)\subset T$.
The zone $Z_T$ of all arcs $\gamma\subset T$ is a monotonicity zone for $f$ if and only if $T$ is elementary with respect to $f$. }
\end{remark}

\begin{definition}\label{def:mu}\emph{
If $Z$ is a monotonicity zone of $f$ then for any $q\in Q_{Z,f}$ there exists a unique zone $Z_q\subset Z$ which is a maximal $q$-zone for $f$.
We define the \emph{width function} $\mu:Q_{Z,f} \rightarrow \F_+\cup\{\infty\}$ as $\mu(q)=\mu(Z_q)$.}
\end{definition}
\begin{remark}
\emph{ Let $Z$ be a monotoniciy zone for a function $f$. For any $\gamma \in Z$ such that $q=ord_\gamma f$, one has $Z_q=Z_\gamma f$.}
\end{remark}

\begin{remark}\label{rmk:mu}
\emph{Let $T$ be an elementary $\beta$-H\"older triangle with respect to a Lipschitz function $f$.
Then the width function $\mu(q)$ on $Q=Q_{Z_T,f}$ associated with the zone $Z_T$ of all arcs in $T$ has the following properties:
$\mu(q)$ is piecewise-affine, either non-decreasing or non-increasing but not necessarily continuous, on $Q$ (see \cite{DS} or \cite[Theorem 3.2]{birbrair2014lipschitz});
$\mu(q)\le q$ for all $q\in Q$; $\min_q \mu(q)=\beta$; $\mu(q)$ is constant only when $Q$ is a point.
}
\end{remark}

\begin{lemma} (Minimal Pizza decomposition)\label{MP}
\emph{Let $f:(T,0) \rightarrow (\R,0)$ be a Lipschitz function. Then there exists a finite family of perfect zones $Z_i$ of arcs in $T$ satisfying the following:}

\begin{enumerate}
  \item Two of the zones $Z_i$ are the boundary arcs of $T$.
  \item $Z_i \cap Z_j=\emptyset$ for $i\ne j$.
  \item Any family of arcs $\{\gamma_i\}$ such that $\gamma_i \in Z_i$ decomposes $T$ into H\"older triangles elementary with respect to $f$, such that these triangles and the corresponding width functions present a minimal pizza for $f$ on $T$.
  \item Any minimal pizza for the function $f$ can be obtained in this way, i.e., taking arcs $\gamma_i\in Z_i$.
\end{enumerate}
\end{lemma}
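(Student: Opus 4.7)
The plan is to bootstrap from the standard Pizza decomposition theorem of \cite{birbrair2014lipschitz}, which provides a finite decomposition of $T$ into elementary H\"older triangles with piecewise affine width functions, and then to characterize precisely the arcs that can serve as separators in a \emph{minimal} such decomposition. First, I invoke that theorem to obtain a decomposition $T = T_1 \cup \cdots \cup T_N$ with separator arcs $\gamma_0, \ldots, \gamma_N$ (where $\gamma_0, \gamma_N$ are the boundary of $T$), and reduce to a minimal one by repeatedly merging adjacent $T_j, T_{j+1}$ whenever $T_j \cup T_{j+1}$ remains elementary with respect to $f$. This yields a minimal pizza whose separators have critical orders $q_i = \mathrm{ord}_{\gamma_i} f$ and whose adjacent width functions genuinely fail to glue as a single affine function across $\gamma_i$.

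Next, I define $Z_0 = \{\gamma_0\}$ and $Z_N = \{\gamma_N\}$ as singular zones, and for each interior $i$ I define $Z_i$ to be the maximal zone of arcs that can replace $\gamma_i$ in a minimal pizza without changing the pizza data. By construction $Z_i$ is a sub-zone of the maximal $q_i$-zone $Z_{\gamma_i} f$, which is closed by \cite[Remark 3.4]{birbrair2014lipschitz}. The key technical step is to show that each interior $Z_i$ is perfect. Applying Remark \ref{rmk:mu} to the two elementary triangles adjacent to $\gamma_i$, one identifies $Z_i$ with the generic zone $Z^\circ_{T^*}$ of a $\mu(q_i)$-H\"older triangle $T^*$ obtained by gluing the maximal $q_i$-subzones of $T_i$ and $T_{i+1}$ along $\gamma_i$; perfectness of $Z_i$ then follows from Example \ref{zones}(iv).

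Disjointness (item (2)) is a consequence of minimality: if two of the zones shared an arc, one could construct a pizza with strictly fewer pieces, contradicting the minimality of the original. For item (3), given any choice of arcs $\gamma'_i \in Z_i$, the triangles $T'_j = T(\gamma'_{j-1}, \gamma'_j)$ are elementary with respect to $f$ and carry the same width functions as the original $T_j$, because perfectness of $Z_i$ guarantees that the H\"older exponents at the interfaces are preserved; minimality of the resulting pizza is inherited from the original. For item (4), any minimal pizza's separators must realize the same sequence of critical orders $q_i$ and width functions (by uniqueness of the width function as an invariant), so by maximality of the $Z_i$ they lie in the corresponding zones.

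The main obstacle I expect is the perfectness step together with the proper intrinsic definition of $Z_i$: one must show that the set of arcs equivalent to $\gamma_i$ as a minimal-pizza separator coincides with the generic zone of a single H\"older triangle of exponent $\mu(q_i)$. This requires careful use of the piecewise-affine structure of $\mu$ from Remark \ref{rmk:mu} to rule out arcs of $Z_{\gamma_i} f$ that sit inside a single $T_j$ and thus cannot serve as legitimate separators without breaking the elementary structure.
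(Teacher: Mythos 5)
Your overall strategy is the same as the paper's: start from a minimal pizza with separator arcs $\gamma_i$, build a zone $Z_i$ around each $\gamma_i$, and prove perfectness, disjointness, and the replacement property. The gap is in your key technical step, the identification of the interior zones $Z_i$. You treat $\mu(q_i)$ as a single well-defined number and identify $Z_i$ with the generic zone $Z^\circ_{T^*}$ of the H\"older triangle obtained by gluing the maximal $q_i$-zones of the two slices adjacent to $\gamma_i$. But the two width functions $\mu_{i-1}$ and $\mu_i$ of the adjacent slices need not agree at $q_i=\mathrm{ord}_{\gamma_i}f$, and this dichotomy is exactly what the paper's proof is organized around. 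When $\mu_{i-1}(q_i)=\mu_i(q_i)=\mu$, your picture is essentially correct and the paper simply takes $Z_i=Z_{\gamma_i}f$. When, say, $\mu_{i-1}(q_i)<\mu_i(q_i)=\mu$, your candidate zone fails: the union of the two maximal $q_i$-zones is a zone of order $\mu_{i-1}(q_i)<\mu$, and an arc $\gamma$ in the $T_i$-side with $tord(\gamma,\gamma_i)\ge\mu$ cannot be generic in it, because every arc of that union at tangency order exactly $\mu_{i-1}(q_i)$ from $\gamma$ lies in $T_{i-1}$, hence on one side of $\gamma$ only, so no H\"older triangle of exponent $\mu_{i-1}(q_i)$ inside the zone has $\gamma$ in its generic part. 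The correct zone in this case is asymmetric: the full maximal $q_i$-zone on the side where the width is larger, united with only the arcs $\gamma\subset T_{i-1}$ satisfying $tord(\gamma,\gamma_i)\ge\mu$ on the other side; this union has order $\mu$ and is perfect. Your argument as written produces a non-perfect zone precisely at the separators where the pizza's width function is discontinuous.

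Two smaller issues. First, your definition of $Z_i$ as ``the maximal set of arcs that can replace $\gamma_i$ in a minimal pizza'' makes items (3) and (4) nearly tautological but shifts the burden elsewhere: you must still prove this set is a zone at all (any two of its arcs are joined by a H\"older triangle contained in it), which is not automatic and which the paper sidesteps by giving the zones explicitly. Second, your disjointness argument (merging to get a smaller pizza) is only a sketch; the paper instead verifies disjointness directly from the affine structure, noting that $tord(\gamma,\gamma_i)\ge\mu_i(q_i)$ for $\gamma\in Z_i$ while $tord(\gamma',\gamma_i)=\beta_i$ for $\gamma'\in Z_{i+1}$, and these exponents differ because $\mu_i$ is non-constant on a minimal pizza slice. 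You should also dispose of the degenerate case $f|_{\gamma_i}\equiv 0$, where $Z_i$ is the singular zone $\{\gamma_i\}$.
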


\begin{proof}
   Consider a partition of $T$ into H\"older triangles $T_i=T(\gamma_i,\gamma_{i+1})$ with a minimal pizza $\mathcal{H}=\{\beta_i,Q_i,s_i,\mu_i\}$ for $f$ constructed in \cite{birbrair2014lipschitz}.
    Here $\beta_i$ is the exponent of $T_i$, $Q_i\subset\F_+\cup\{\infty\}$ is the set (either a point or a segment) of values $ord_\gamma f$ for all $\gamma\subset T_i$, $s_i\in\{+,-,0\}$ is the sign of $f$ on the interior of $T_i$, and $\mu_i:Q_i\to\F_+\cup\{\infty\}$ (the width function) is an affine function (a constant if $Q_i$ is a point). Let $\gamma_i$ be the common boundary arc of triangles $T_{i-1}$ and $T_i$. If $f|_{\gamma_i}\equiv 0$ then $Z_i=\{\gamma_i\}$ is a singular zone, thus it is perfect. Assume that $f|_{\gamma_i}\not\equiv 0$ and consider the following cases:

  \medskip

  (i) If $\mu_{i-1}(\gamma_i)=\mu_i(\gamma_i)=\mu$, we define $Z_i=Z_{\gamma_i} f$.
  We claim that any arc $\gamma\in Z_i$ belongs to the interior of the H\"older triangle $T_{i-1}\cup T_i$.
  This is obvious when the exponents $\beta_{i-1}$ and $\beta_i$ of $T_{i-1}$ and $T_i$ are both less than $\mu$
  (these exponents cannot be greater than $\mu$).
  If both $\mu_{i-1}$ and $\mu_i$ are not constant, each of the two triangles contains an arc $\gamma$ with $\mu(ord_\gamma f)\ne\mu$, thus $Z_i$
  cannot extend beyond $\gamma$.
  It remains to consider the case when one of the two triangles, say $T_i$, has exponent $\mu$ and $\mu(ord_\gamma f)=\mu$ for each arc $\gamma\subset T_i$.
  In such a case, our pizza $\mathcal{H}$ would be a refinement of a smaller pizza where $T_{i-1}\cup T_i$ is replaced by a single
  H\"older triangle, in contradiction to the assumption that it is a minimal pizza.

  Thus any arc $\gamma\in Z_i=Z_{\gamma_i} f$ belongs to the interior of $T_{i-1}\cup T_i$.
  Now we prove that any $\gamma\in Z_i$ is generic.
  Let $\gamma\in Z_i$ be an arc in $T_i$ (the case of $\gamma$ being an arc in $T_{i-1}$ is similar).

If $\beta_i<\mu$ then $\mu$ is the maximal value of the affine function $\mu_i$. Thus, since $\gamma_i$ is necessarily the supporting arc of $T_i$, any arc $\gamma'\subset T_i$ such that $tord(\gamma',\gamma_i)\ge\mu$
belongs to $Z_i$. We can choose $\gamma'$ in $T_i$ so that $\gamma\subset T(\gamma_i,\gamma')$ and $tord(\gamma,\gamma')=\mu$. Observe that since $tord(\gamma,\gamma_i)\geq \mu$, one has $tord(\gamma',\gamma_i)\geq \mu$ (i.e., $\gamma' \in Z_i$). Let $\gamma''\in Z_i$ be an arc in $T_{i-1}$ such that $tord(\gamma_i,\gamma'')=\mu$ (such an arc always exists by the definition of the width function).
 Then $\gamma\subset T(\gamma',\gamma'')$ and $tord(\gamma,\gamma')=tord(\gamma,\gamma'')=\mu$, thus $\gamma$ is generic.

If $\beta_i=\mu$ then $\mu$ is the minimal value of $\mu_i$, and $tord(\gamma,\gamma_{i+1})=\mu$ ($\mu_i$ is not constant). Hence, in this case, the supporting arc of $T_i$ is $\gamma_{i+1}$. It gives that any arc $\gamma'$ in $T_i$ such that $tord(\gamma',\gamma_{i+1})=\mu$ belongs to $Z_i$.
Thus we can choose $\gamma'\in Z_i$ so that $\gamma\subset T(\gamma_i,\gamma')$ and $tord(\gamma,\gamma')=\mu$.
Choosing $\gamma''\subset T_{i-1}$ as before, we prove that $\gamma\subset T(\gamma',\gamma'')$ is generic.

  (ii) If $\mu_{i-1}(\gamma_i)<\mu_i(\gamma_i)=\mu$, we define $Z_i$ as the union of the two zones of order $\mu$:
  $Z'=\{\gamma\subset T_i: ord_\gamma f=ord_{\gamma_i} f\}$ and $Z''=\{\gamma\subset T_{i-1}: tord(\gamma,\gamma_i)\ge\mu\}$.
  As in case (i), we show first that any arc $\gamma\in Z_i$ belongs to the interior of $T_i\cup T_{i-1}$.
  This is obvious for $\gamma\in Z''$ since $\mu_{i-1}(\gamma_i)<\mu$.
  For $\gamma\in Z'$, this is obvious when $\beta_i<\mu$. If $\beta_i=\mu$, one has that $\mu$ is the minimal value of $\mu_i$ (with $\mu_i$ non constant). Thus $tord(\gamma,\gamma_{i+1})=\mu$ for any arc $\gamma\in Z'$ in this case.


  Now we can prove that any arc $\gamma\in Z_i$ is generic.
  If $\gamma\in Z'$ and $\beta_i<\mu$ then $\mu$ is the maximal value of the affine function $\mu_i$, and $Z'$ is the set of all arcs $\gamma'\subset T_i$ such that
  $tord(\gamma',\gamma_i)=\mu$. Notice, one can choose an arc $\gamma'$ such that $\gamma \subset T(\gamma',\gamma_i)$ and $tord(\gamma',\gamma)=\mu$. Then, $\gamma'\in Z_i$. Also, we can always choose such an arc $\gamma''\in Z''$
  such that $tord(\gamma'',\gamma_i)=\mu$. Then $\gamma\subset T(\gamma',\gamma'')$ satisfies $tord(\gamma,\gamma')=tord(\gamma,\gamma'')=\mu$. Thus $\gamma$ is generic.
  If $\gamma\in Z''$, since any arc $\gamma''\subset T_{i-1}$ such that $tord(\gamma'',\gamma_i)=\mu$ belongs to $Z''$, we can choose $\gamma''$ so that
  $\gamma\subset T(\gamma'',\gamma_i)$ and $tord(\gamma,\gamma'')=\mu$. For any arc $\gamma'\in Z'$ such that $tord(\gamma',\gamma_i)=\mu$ we have
  $\gamma\subset T(\gamma',\gamma'')$ and $tord(\gamma,\gamma')=tord(\gamma,\gamma'')=\mu$. Thus $\gamma$ is generic.

The case $\mu_{i-1}(\gamma_i)>\mu_i(\gamma_i)=\mu$ is similar. Thus all zones $Z_i$ are perfect.
To prove that $Z_i\cap Z_j=\emptyset$ for $i\ne j$, it is enough to consider the case $j=i+1$ and show that
$Z_i$ and $Z_{i+1}$ do not have common arcs in $T_i$.



\vspace{0.5cm}

Observe that the affine function $\mu_i$ has distinct values on the boundary arcs of $T_i$. We may assume that $\mu_i(\gamma_i)>\mu_i(\gamma_{i+1})=\beta_i$. Then $tord(\gamma,\gamma_i)\ge\mu_i(\gamma_i)$ for any arc $\gamma\in Z_i$, and $tord(\gamma',\gamma_i)=\beta_i$ for any arc $\gamma'\in Z_{i+1}$. Thus, the equality $\gamma=\gamma'$ is impossible.

This completes the proof of Lemma \ref{MP}.
\end{proof}

\section{Multipizza}

\begin{definition}\label{multi0}
\emph{Let $\mathbf{f}=\{f_1,\ldots,f_n\}$ be a set of germs of Lipschitz functions $f_\nu:(T,0)\to (\R,0)$ defined on an oriented H\"older triangle $T$.
A \emph{geometric multipizza} $\mathcal{M}$ associated to $\mathbf f$  is a 
decomposition of $T$ into H\"older triangles $T_i$
ordered according to the orientation of $T$, such that $T_i\cap T_j=\{0\}$ when $|i-j|>1$, and the intersection of $T_i$ and $T_{i+1}$ is their common boundary arc $\gamma_i$, with the following properties:
\begin{enumerate}
\item Each H\"older triangle $T_i$ is elementary with respect to each function $f_\nu$;
\item Each width function $\mu_{i\nu}:Q_{i\nu}\to \F_+\cup\{\infty\}$ for $f_\nu|_{T_i}$, where $Q_{i\nu}=Q_{Z_{T_i},f_\nu}$, is affine or constant if $Q_{i\nu}$ is a point;
\item For each $T_i$ (unless $Q_{i\nu}$ is a point for each $\nu$) one of its two boundary arcs (called the \emph{supporting arc} $\tilde\gamma_i$ of $T_i$)
  is selected so that, for each arc $\gamma \subset T_i$ and each $\nu$ such that $ord_\gamma f_\nu \neq ord_{\tilde\gamma_i} (f_\nu)$, we have $\mu_{i\nu}(q)=tord(\gamma,\tilde\gamma_i)$;
\item The sign of each function $f_\nu$ on the interior of each H\"older triangle $T_i$ is $s_{i\nu}\in\{+,-,0\}$ .
\end{enumerate}}
\end{definition}

\begin{remark}\label{compatibility}
\emph{Let $q_{i\nu}$ be the endpoint of a segment $Q_{i\nu}$ where $\mu_{i\nu}$ has its maximum, and let $q'_{i\nu}$ be its other endpoint (if $Q_{i\nu}$ is a point then $q_{i\nu}=q'_{i\nu}$).
Then the order of $f_\nu$ on the supporting arc $\tilde\gamma_i$ of $T_i$ is $q_{i\nu}$.
The continuity of $f_\nu$ on $T$ implies the following \emph{compatibility conditions} of the data in Definition \ref{multi0} for each common boundary arc $\gamma_i$ of the H\"older triangles $T_i$ and $T_{i+1}$:}

\begin{equation}\label{eq:compatibility}
\begin{split}
 q_{i\nu}=q_{i+1,\nu} \text{ when } \tilde\gamma_i=\gamma_i=\tilde\gamma_{i+1},\quad  q'_{i\nu}=q_{i+1,\nu} \text{ when }  \tilde\gamma_i\ne\gamma_i=\tilde\gamma_{i+1},\\
 q_{i\nu}=q'_{i+1,\nu}  \text{ when }   \tilde\gamma_i=\gamma_i\ne\tilde\gamma_{i+1}, \quad
 q'_{i\nu}=q'_{i+1,\nu}  \text{ when } \tilde\gamma_i\ne\gamma_i\ne\tilde\gamma_{i+1}.
 \end{split}
 \end{equation}
 \end{remark}

\begin{definition}\label{multi1}
\emph{Two geometric multipizzas $\mathcal M$ and $\mathcal M'$ are \emph{combinatorially equivalent} if
\begin{enumerate}
  \item[i.] The two 
  decompositions $\{T_i\}$ and $\{T'_i\}$ of $T$ into H\"older triangles have the same sequence $\{\beta_i\}$ of exponents sorted on the same way;
  \item[ii.] Selections of supporting arcs $\tilde\gamma_i$ and $\tilde\gamma'_i$ for triangles $T_i$ and $T'_i$ of the two 
  decompositions are compatible, i.e., the sequence of supporting arcs are sorted on the same way, preserving the compatibility conditions given in (\ref{eq:compatibility});
  \item[iii.] $Q_{i,\nu}=Q'_{i,\nu}$ and $\mu_{i,\nu}=\mu'_{i,\nu}$ for all $i$ and $\nu$;
  \item[iv.] The signs $s_{i\nu}$ and $s'_{i\nu}$ of the functions $f_\nu$ in the interiors of $T_i$ and $T'_i$ are the same for all $i$ and $\nu$.
\end{enumerate}}
 \end{definition}

\begin{definition}\label{abstractholder}\emph{
An \emph{abstract H\"older complex} on an oriented $\beta$-H\"older triangle $T$ is an ordered sequence $\{\beta_i\}$
of exponents $\beta_i\in\F_+$, with $\min_i \beta_i=\beta$. This is equivalent to a bi-Lipschitz equivalence class of decompositions $\{T_i\}$ of $T$
into $\beta_i$-H\"older triangles $T_i$, ordered according to the orientation of $T$.}
\end{definition}

\begin{definition}\label{abs}
\emph{An \emph{abstract multipizza} $\mathcal{H}$ is an abstract H\"older complex $\{\beta_i\}$ on an oriented H\"older triangle $T$,
and for each $i$ a finite collection
$\mathcal{H}_i=\big\{\{Q_{i\nu}\},\{\mu_{i\nu}\},\{s_{i\nu}\}\big\},$
where
\begin{enumerate}
\item Each $Q_{i\nu}=[a_{i\nu},b_{i\nu}]$ is a closed directed segment of $\F_+\cup\{\infty\}$, where ``directed'' means either $a_{i\nu}<b_{i\nu}$ or $a_{i\nu}>b_{i\nu}$ (or else $a_{i\nu}=b_{i\nu}$ and $Q_{i\nu}$ is a point) with the compatibility conditions $b_{i\nu}=a_{i+1,\nu}$;
\item $\mu_{i,\nu}:Q_{i,\nu}\to \F_+\cup\{\infty\}$ is an affine function, non-constant unless $Q_{i\nu}$ is a point, such that $\mu_{i\nu}(q)\le q$ and $\min_q \mu_{i\nu}(q)=\beta_i$;
\item For each $i$, unless each $Q_{i\nu}$ is a point, either $\mu_{i\nu}(a_{i\nu})=\beta_i$ for all $\nu$, or $\mu_{i\nu}(b_{i\nu})=\beta_i$ for all $\nu$;
\item Each $s_{i\nu}$ is an element of $\{+,-,0\}$.
\end{enumerate}
}\end{definition}

\begin{remark}\label{abstract-geometric}
\emph{We associate an abstract multipizza $\mathcal H$ with a geometric multipizza $\mathcal M$ by writing each segment $Q_{i\nu}$ of $\mathcal M$ as a directed segment $[a_{i\nu},b_{i\nu}]$, when $Q_{i\nu}$ is not a point. This is done by setting $a_{i\nu}=ord_{\gamma_{i-1}} f_\nu$ and
$b_{i\nu}=ord_{\gamma_i} f_\nu$, where $\gamma_i$ is the common boundary arc of $T_i$ and $T_{i+1}$.
One can easily check that two geometric multipizzas are combinatorially equivalent exactly when the same abstract multipizzas are associated with them.}
\end{remark}
\begin{example}\label{multipizza.example}
\emph{Let $f_1,f_2,f_3\colon (T=\R^2_{\geq 0},0)\rightarrow (\R,0)$ be the function germs given by $f_1(x,y)=x^2+y^4, \ f_2(x,y)=x$ e $f_3(x,y)=x^3-y^2$. One get the geometric multipizza of the multigerm $f=\{f_1,f_2,f_3\}$ as follows:
}\begin{itemize}
       \item \emph{The triangle $T_{11}=T=T_{21}$ is elementary with respect the functions $f_1$ and $f_2$. Its correspondent segments are $Q_{11}=[2,4]$ and $Q_{21}=[1,+\infty]$. }

        \item \emph{The arc $\tilde\gamma_{12}=\{(0,t)\}_{t\geq 0}=\tilde\gamma_{22}$ is the arc supporting of $T$, providing the width functions $\mu_{11}(q)=\frac{q}{2}$ and $\mu_{21}(q)=q$ for functions $f_1$ and $f_2$ respectively.}

  \item \emph{An elementary decomposition of $T$ with respect to $f_3$ is $T_{31}\cup T_{32}\cup T_{33}=T(\gamma_{31},\gamma_{32})\cup T(\gamma_{32},\gamma_{33})\cup T(\gamma_{33},\gamma_{34})$, where $\gamma_{31}(t)=(t,0), \gamma_{32}(t)=(t,t^{\frac{3}{2}}), \gamma_{33}(t)=(t,2t^{\frac{3}{2}})$ and $\gamma_{34}(t)=(0,t)$.}
        \item \emph{On these triangles, we have $Q_{31}=[3,\infty], \ Q_{32}=[\infty,3], \ Q_{33}=[3,2]$ respectively.}

        \item \emph{The supporting arc of $T_{31}$ and $T_{32}$ is  $\gamma_{32}(t)=(t,t^{\frac{3}{2}})$; the supporting arc of $T_{33}$ is $\gamma_{33}(t)=(t,2t^{\frac{3}{2}})$;}

        \item \emph{The width functions are $\mu_{31}(q)=q-\frac{3}{2}=\mu_{32}(q)$ and $\mu_{33}(q)=\frac{q}{2}$.}
    \end{itemize}
    \emph{Then, the H\"older complex associated to the multipizza $\mathcal{M}$ is the H\"older complex associated to the pizza of the function $f_3$. It corresponds to a refinement of the pizzas of $f_1$ and $f_2$. It provides the multipizza of $\mathcal{M}$ of $f$.}

\end{example}
\begin{definition}\label{multi2}\emph{A geometric multipizza $\mathcal{M}'$ is called a \emph{refinement} of a multipizza $\mathcal{M}$ if
\begin{enumerate}
  \item The decomposition $\{T'_j\}$ of the multipizza $\mathcal{M}'$ is a proper refinement of the partition $\{T_i\}$ of the multipizza $\mathcal{M}$;
  \item For each $T'_j\subset T_i$ and for each $\nu$, we have $Q'_{j,\nu}\subset Q_{i,\nu}$ and $\mu'_{j,\nu}=\mu_{i,\nu}|_{Q'_{j,\nu}}$.
\end{enumerate}
A multipizza is called \emph{minimal} if there exists no multipizza $\overline{\mathcal{M}}$ such that $\mathcal{M}$ is a refinement of $\overline{\mathcal{M}}$.}
\end{definition}

 \begin{remark}
 \begin{enumerate}
 \item[i.] \emph{For $n=1$ and $\mathbf{f}=\{f\}$, a multipizza is the same as a pizza defined in \cite{birbrair2014lipschitz}, and Definitions \ref{multi0} - \ref{multi1}
 are the same as Definitions 2.12 - 2.13 in \cite{birbrair2014lipschitz}. Note that selection of supporting arcs $\tilde\gamma_i$ corresponds to orientation of $Q_i$ in \cite{birbrair2014lipschitz}.}
 \item[ii.] \emph{In the example \ref{multipizza.example}, the multipizza $\mathcal{M}$ is minimal.}
 \end{enumerate}
 \end{remark}

\begin{theorem}\label{Main}
Let $T\subset (\R^2,0)$ be a H\"older triangle, and $\mathbf{f}=(f_1,\,f_2,\ldots,f_n)$ a multigerm formed by Lipschitz functions $f_\nu:T \rightarrow (\R,0)$.
Then there exists a multipizza $\mathcal M$ of $\mathbf f$ satisfying Definition \ref{multi0}.

\begin{proof}
  We show the result for two functions and then we use the induction by the numbers of the functions. Let $f_1, f_2:(T,0)\rightarrow (\mathbb{R},0)$ be two Lipschitz definable functions. Let $\{Z_{\beta_i\}}$ be the family of perfect zones, constructed in the lemma \ref{MP} for $f_1$ and $\{\tilde Z_{\beta_j}\}$ be the family of perfect zones, constructed for $f_2$. Consider the family of zones $\{\tilde Z_S\}$ defined as follows:
  \begin{enumerate}
    \item If for some $i$, $Z_{\beta_i}$ does not intersect any $ \tilde Z_{\beta_j}$, then $Z_{\beta_i} \in \{\tilde Z_S\}$;
    \item If for some $j$, $ \tilde Z_{\beta_j}$ does not intersect to any $Z_{\beta_i}$, then $ \tilde Z_{\beta_j} \in \{\tilde Z_S\}$;
    \item If for some pair $i,j$, $Z_{\beta_i}\cap \tilde Z_{\beta_j}\neq \emptyset$, then $Z_{\beta_i}\cap \tilde Z_{\beta_j} \in \{\tilde Z_S\}$.
  \end{enumerate}
  Notice that a intersection of perfect zones is a perfect zone with order equal to $\beta_{ij}=max\{\beta_i,\beta_j\}$. Using the procedure, we obtain a family of perfect zones $\{\tilde Z_S\}$ such that for $S_1 \neq S_2$, then $\tilde Z_{S_1} \cap \tilde Z_{S_2}=\emptyset$ . Let us choose a family of arcs $\{\beta'_S\}$, such that for any $\tilde Z_S$ there exists a unique definable arc $\beta'_S$ such that $\beta'_S \subset \tilde Z_S$

  \medskip
  
The family of triangle $T_S=T(\beta'_{S-1},\beta'_S)$ defines a decomposition of the triangle $T$ into H\"older triangles. Notice that all the triangles, obtained by the procedure are elementar with respect to $f_1$ and $f_2$. It is because each triangle $T_S$ is a subtriangle of a given elementar triangle. And each width function corresponding to each $f_1$ and $f_2$ on each triangle $T_S$ is affine.  But these triangles does not satisfies necessarily the conditions of the theorem \ref{Main} because for some $\gamma \subset T_S$ may be happens that $\mu_1(\gamma)$ can be equal to $tord(\gamma, \tilde \beta_{S-1})$ and $\mu_2(\gamma)$ can be equal to $tord(\gamma, \tilde \beta_S)$. That is why we must improve the family of the arcs in order to obtain the condition of the theorem. Let $\tilde Z_{S-1,S}$ be a zone of the arcs $\gamma \subset T_S$ such that $tord(\gamma, \tilde \beta_S)=tord(\gamma, \tilde \beta_{S-1})$.

  \medskip

  Notice that the zone $\tilde Z_{S-1,S}$ is a perfect zone and it does not depend on the choice of the arcs $\tilde \beta_{S-1} \in \tilde Z_{S-1}$, $\tilde \beta_S \in \tilde Z_S$. Let us choose an arc $\tilde \beta_{S-1,S} \in \tilde Z_{S-1,S}$. Let $T_1$ and $T_2$ be the triangles obtained using the decomposition of $T_S$ by arc $\tilde \beta_{S-1,S}$. If the width $\mu_1(\gamma)$ for $f_1$ is counted as $tord(\gamma, \tilde \beta_{S-1})$ then for all $\gamma \subset T_2$ we get $tord(\gamma,\tilde \beta_{S-1})=tord(\tilde \beta_{S-1},\tilde \beta_S)$ and thus $ord_{\tilde \gamma} f_1$ is constant for all $\tilde \gamma \subset T_2$. By the same reason, we get $ord_{\tilde {\tilde \gamma}} f_2$ constant, for all $\tilde {\tilde \gamma} \subset T_1$. Thus, the triangles obtained by the arcs $\{\tilde \beta_S\}$ and $\{\tilde \beta_{S-1,S}\}$ satisfies the condition of the theorem \ref{Main}. We proved the statement for two functions.

  \medskip

  Induction on the number of functions:

  \medskip

  Suppose that for any $n-1$ functions, we can construct a family of zones $\{Z_i\}$ such that for any choice of the arcs $\gamma_i \in Z_i$, we obtain a decomposition, satisfying the Multipizza theorem. Consider a finite set of functions $f_1,f_2,\ldots,f_n$. Applying the induction hypothesis, we may obtain a family of perfect zones $\{Z_i\}$ such that the choice of the arcs $\gamma_i \in Z_i$ define a decomposition of $T$ at $0$ into H\"older triangles satisfying the Multipizza theorem for $f_1,f_2,\ldots,f_{n-1}$. Next, we apply the lemma \ref{MP} to the function $f_n$. We obtain a family of zones $\{\tilde Z_j\}$ such that any choice of $\tilde \gamma_j \in \tilde Z_j$ defines a pizza for $f_n$. Using the procedure described for a pair of functions $\{f_1,f_2\}$ for all the pairs $\{f_n,f_i\}$, i.e., for the family of zones $\{\tilde Z_j\}$, $\{Z_i\}$, $i=1,2,\ldots,n-1$, we obtain the result for $f_1,f_2,\ldots,f_n$.
\end{proof}
\end{theorem}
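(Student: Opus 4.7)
The plan is induction on the number $n$ of functions. The base case $n=1$ is exactly Lemma \ref{MP}: it provides a family of pairwise disjoint perfect zones $\{Z_i\}$ whose arbitrary arc representatives decompose $T$ into Hölder triangles elementary with respect to $f_1$ with affine width functions, so all four conditions of Definition \ref{multi0} hold (with sign data and supporting arc read off directly from the minimal pizza for $f_1$).

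For the inductive step, assume the construction for $f_1,\ldots,f_{n-1}$ yields a family of disjoint perfect zones $\{Z_i\}$. Apply Lemma \ref{MP} to $f_n$ to obtain perfect zones $\{\tilde Z_j\}$. The crucial property I would verify first is that the intersection of two perfect zones is again a perfect zone, with order equal to the maximum of the two orders. From this I would build a common refinement $\{W_S\}$: retain each $Z_i$ (resp. $\tilde Z_j$) disjoint from the other family, and otherwise take the nonempty intersections $Z_i\cap\tilde Z_j$. Choosing any arc $\gamma_S\in W_S$ produces a decomposition of $T$ into Hölder subtriangles $T_S=T(\gamma_{S-1},\gamma_S)$ that simultaneously refines both underlying decompositions, so each $T_S$ is elementary with respect to every $f_\nu$ and every width function $\mu_{S,\nu}$ is affine.

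The main obstacle is condition (3) of Definition \ref{multi0}: the selected supporting arc $\tilde\gamma_S$ of $T_S$ must control the width functions of \emph{all} $f_\nu$ at once. A conflict arises when two functions prefer opposite boundary arcs of the same $T_S$ as supporting arc. To resolve this, I would introduce the perfect zone $\tilde Z_{S-1,S}\subset T_S$ of arcs $\gamma$ with $tord(\gamma,\gamma_{S-1})=tord(\gamma,\gamma_S)=\beta_S$; this zone is independent of the particular choice of boundary representatives of $T_S$. Bisect $T_S$ by an arc $\tilde\beta_{S-1,S}$ of this zone into two subtriangles. On each half, any function whose supporting arc was the \emph{far} boundary now sees every arc at constant order of tangency to that boundary, so its order becomes constant and its segment $Q$ degenerates to a point, making the supporting-arc condition vacuous for that function on that subtriangle; the remaining functions share the same compatible supporting arc. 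Iterating this bisection over all conflicting $T_S$ yields a decomposition satisfying all four conditions of Definition \ref{multi0}. Signs $s_{S,\nu}$ and the compatibility conditions (\ref{eq:compatibility}) are inherited automatically from the continuity of each $f_\nu$ on $T$, completing the induction.
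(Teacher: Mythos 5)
Your proposal is correct and follows essentially the same route as the paper: build a common refinement of the perfect-zone families via the intersection property (order of the intersection equals the maximum of the orders), then resolve supporting-arc conflicts by bisecting each offending triangle along an arc of the zone equidistant in tangency order from both boundary arcs, which forces the conflicting function's order to be constant on one half. The only cosmetic difference is that the paper treats the two-function case explicitly before inducting, while you fold that merging step directly into the inductive step.
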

\begin{remark}
\emph{Notice that there is an essential difference between the interaction of pairs of Holder triangles and the interaction of pairs of perfect zones. From the non-Archimedean condition, we saw that if $\mu(\beta)=\mu(Z_1)=\mu(Z_2)$ and $Z_1\cap Z_2\neq \emptyset$ then $\mu(Z_1\cap Z_2)=\beta$. In other words, the order of intersection of perfect zones does not depend of its ``relative position". It is a subtle ingredient that allows the Multipizza algorithm works.}
\end{remark}

\section{Multi-$\mathcal{K}$-Lipschitz Equivalence}
\begin{definition} \emph{Let $f{=}\{f_1,f_2,\ldots,f_n\}{:}(\mathbb{R}^2,0)\rightarrow (\mathbb{R},0)$ and $g{=}\{g_1,\ldots,g_n\}{:}(\mathbb{R}^2,0)\rightarrow (\mathbb{R},0)$ be two functions multigerms. The multigerms $f$ and $g$ are called {\emph multi-$\mathcal{K}$-Lipschitz equivalent} if there exists a bi-Lipschitz homeomorphism $H{:}(\mathbb{R}^2\times \mathbb{R}^n,0)\rightarrow (\mathbb{R}^2\times \mathbb{R}^n,0)$,
\[
 \ H((x,y),y_1,y_2,\ldots,y_n)=(h(x,y),\tilde H_1((x,y),y_1),\ldots, \tilde H_n((x,y),y_n))
\]
such that $h:(\mathbb{R}^2,0)\rightarrow (\mathbb{R}^2,0)$ is a bi-Lipschitz homeomorphism and
$\tilde H_j((x,y),0)=0$, $j=1,2,\ldots,n$ and}
\begin{center}
  $H((x,y),f_1(x,y),f_2(x,y),\ldots,f_n(x,y))=(h(x,y),g_1(h(x,y)),\ldots,g_n(h(x,y)))$.
\end{center}
\end{definition}
\begin{remark}
\emph{Notice that, since
\[
H((x,y),0,\ldots,0,f_j(x,y),0\ldots,0){=}(h(x,y),0,\ldots,0,g_j(h(x,y)),0,\ldots,0),
\]
for all $j$, the functions $f_j$ and $g_j$ are $\mathcal{K}$-Lipschitz equivalent with the same bi-Lipschitz change of coordinates. As in \cite{comEdvalter2}, $h$ is also called the common factor of the multigerm of bi-Lipschitz homeomorphisms $H_j=(h,\tilde H_j)$, where $H_j\circ(Id_{(x,y)},f)=(h,g\circ h)$.}
\end{remark}
\begin{theorem}
\emph{Let $f{=}\{f_1,f_2,\ldots,f_n\}{:}(\mathbb{R}^2,0)\rightarrow (\mathbb{R},0)$ and $g{=}\{g_1,\ldots,g_n\}{:}(\mathbb{R}^2,0)\rightarrow (\mathbb{R},0)$ be two Lipschitz functions multigerms. If $f$ and $g$ have two combinatorially equivalent multipizzas then the germs $f$ and $g$ are multi-$\mathcal{K}$-Lipschitz equivalent.}
\end{theorem}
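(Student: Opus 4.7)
The plan is to build $H$ in two stages: first a bi-Lipschitz base map $h\colon(\R^2,0)\to(\R^2,0)$ extracted from the combinatorial equivalence of the two multipizzas, and then the fiber components $\tilde H_\nu$ built from $h$. The first stage uses the shared combinatorial data of the multipizzas to realize the H\"older decomposition of source and target bi-Lipschitzly; the second stage uses the matching orders and signs to define a fiberwise scaling.

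For the base map, since the abstract H\"older complexes of $\mathcal M_f$ and $\mathcal M_g$ coincide, the two decompositions $\{T_i^f\}$ and $\{T_i^g\}$ of $T$ share the same sequence of exponents and their supporting arcs are labelled compatibly. I would construct $h$ triangle by triangle, mapping each $T_i^f$ bi-Lipschitzly to $T_i^g$ so that the supporting arc $\tilde\gamma_i^f$ goes to $\tilde\gamma_i^g$ and every arc $\gamma\subset T_i^f$ goes to the unique $\gamma'\subset T_i^g$ with $\mathrm{tord}(\gamma',\tilde\gamma_i^g)=\mathrm{tord}(\gamma,\tilde\gamma_i^f)$. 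Such triangle-wise maps exist by a standard polar-type construction for H\"older triangles of equal exponent (cf.\ \cite{birbrair2014lipschitz}), and the compatibility conditions (\ref{eq:compatibility}) let the pieces be glued along common boundary arcs. The key consequence, since the affine width functions $\mu_{i\nu}$ coincide, is that $\mathrm{ord}_{h(\gamma)}g_\nu=\mathrm{ord}_\gamma f_\nu$ for every arc $\gamma$ and every $\nu$, with the same leading sign $s_{i\nu}^f=s_{i\nu}^g$.

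For the fiber maps, I set, for each $\nu$,
\[
\tilde H_\nu((x,y),y_\nu)=y_\nu\,\phi_\nu(x,y),\qquad \phi_\nu(x,y)=\frac{g_\nu(h(x,y))}{f_\nu(x,y)}\ \text{on}\ \{f_\nu\ne 0\},
\]
extended continuously across $\{f_\nu=0\}$. The matching of orders and signs from Stage~1 shows $\phi_\nu$ is bounded above and bounded away from $0$. By construction, $\tilde H_\nu((x,y),0)=0$ and $\tilde H_\nu((x,y),f_\nu(x,y))=g_\nu(h(x,y))$, producing the identity $H((x,y),f(x,y))=(h(x,y),g(h(x,y)))$ required for multi-$\mathcal K$-Lipschitz equivalence.

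The main obstacle is to prove that each $\phi_\nu$ is Lipschitz near the origin, as this is what makes $H$ bi-Lipschitz. I would argue zone by zone: on each elementary triangle $T_i$, the affine width function $\mu_{i\nu}$ governs the rate at which $\mathrm{ord}_\gamma f_\nu$ varies with $\mathrm{tord}(\gamma,\tilde\gamma_i)$, and the same function governs $g_\nu\circ h$ because $h$ was built to preserve this tangency. This synchronization allows me to compare $g_\nu(h(x,y))$ and $f_\nu(x,y)$ on pairs of nearby points via a common power of the radial parameter, so the deviation of their ratio from the leading-coefficient limit is controlled by a genuine Lipschitz bound. Piecing these estimates across triangles using the compatibility at shared boundary arcs, together with the Lipschitz property of $h$, $f_\nu$ and $g_\nu$, yields a Lipschitz extension of $\phi_\nu$. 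A standard bi-Lipschitz extension of $h$ to a full neighborhood of the origin (if $T\ne\R^2$) then completes the construction of $H$, and induction on $\nu$ handles the product structure of the remaining coordinates.
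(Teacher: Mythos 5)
Your overall skeleton is the same as the paper's: build a bi-Lipschitz base map $h$ matching the two H\"older decompositions from the combinatorial equivalence, deduce that $f_\nu\circ h$ and $g_\nu$ have the same order and sign along every arc, and then construct the fiber components. Your Stage~1 is fine (indeed more explicit than the paper, which simply asserts the existence of $h$ with $h(\tilde T_i)=T_i$ and then invokes the single-function pizza theorem to get the two-sided comparison $C_1 g_\nu\le (-1)^i f_\nu\circ h\le C_2 g_\nu$). The gap is in Stage~2: the global fiberwise scaling $\tilde H_\nu((x,y),y_\nu)=y_\nu\,\phi_\nu(x,y)$ with $\phi_\nu=g_\nu(h)/f_\nu$ requires $\phi_\nu$ to be Lipschitz, and this is false in general. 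Matching orders and signs only give that $\phi_\nu$ is bounded above and below; its Lipschitz constant can blow up. For example, with $\rho=\|(x,y)\|$, the germs $f_\nu=\rho^2$ and $g_\nu=\rho^2+\rho^{5/2}$ have identical pizzas, yet $g_\nu/f_\nu=1+\rho^{1/2}$ is not Lipschitz at the origin. One cannot repair this by a cleverer choice of $h$ either: for the multigerms $\{\rho^2,\rho^2\}$ and $\{\rho^2+\rho^{5/2},\rho^2-\rho^{5/2}\}$ the corrections to $h$ needed to make the two ratios Lipschitz have opposite signs, so no single base map works, although these multigerms are multi-$\mathcal K$-Lipschitz equivalent.

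The standard fix, which is what the paper's ``natural'' $H$ implicitly is (cf.\ \cite{K-Lip}, \cite{ruas&valette}), is to scale only the segment between $0$ and $f_\nu(x,y)$ in the fiber and to translate outside it: for $f_\nu(x,y)>0$ set $\tilde H_\nu((x,y),y_\nu)=y_\nu\,g_\nu(h(x,y))/f_\nu(x,y)$ for $0\le y_\nu\le f_\nu(x,y)$ and $\tilde H_\nu((x,y),y_\nu)=y_\nu+g_\nu(h(x,y))-f_\nu(x,y)$ for $y_\nu\ge f_\nu(x,y)$ (and symmetrically elsewhere). On the scaled region the $x$-derivative is bounded by $|y_\nu|\,|(g_\nu(h)/f_\nu)'|\le |f_\nu|\,|(g_\nu(h)/f_\nu)'|\le |(g_\nu\circ h)'|+|\phi_\nu||f_\nu'|$, which is controlled using only the Lipschitz property of $f_\nu,g_\nu,h$ and the boundedness of $\phi_\nu$; the translated region is trivially bi-Lipschitz. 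So the only input needed from the multipizza is exactly the order-and-sign comparison (5.2), not any Lipschitz regularity of the ratio. Replace your Stage~2 with this truncated construction and your argument goes through.
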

\begin{proof}
Let $\{T_i\}$ be a decomposition of ($\mathbb{R}^2,0)$ into H\"older triangles, corresponding to a multipizza of $f$ and $\{\tilde T_j\}$ be a decomposition of $(\mathbb{R}^2,0)$ into H\"older triangles, corresponding to a equivalent multipizza of $g$. Since, they are combinatorially equivalent, there exists a bi-Lipschitz automorphism $h:(\mathbb{R}^2,0)\rightarrow (\mathbb{R}^2,0)$ such that $h(\tilde T_i)= T_i$, for all $i$. Since the multipizza of $f$ is a pizza for each $f_j\circ h$, we conclude that, for each $j$, there are constants $C_1, C_2$ and $i\in \{0,1\}$ such that
\begin{equation}\label{same.contact}
  C_1g_j(x,y)\leq (-1)^i f_j(h(x,y)) \leq C_2g_j(x,y).
\end{equation}

We take the map $H\colon (\R^2\times\R^n,0)\rightarrow (\R^2\times \R^n,0)$ in a natural way by $H(x,y,y_1,\ldots,y_n)=(h(x,y),\tilde H_1(x,y,y_1),\ldots,\tilde H_n(x,y,y_n))$. It is easy to see that the map $H$ is a bi-Lipschitz map sending, for each $j$, $Graph(f_j)$ into $Graph(g_j)$ with bi-Lipschitz common factor $h$. Hence, the multigerms $f$ and $g$ are multi-$\mathcal{K}$-bi-Lipschitz equivalent.
\end{proof}
\begin{example}\label{example.multipizza2}
\emph{Let $f,g\colon (\R_{\ge 0}^2,0)\rightarrow (\R,0)$ be two bi-germs given by $f=\{f_1=x^2-y^3,f_2=y\}$ and $g=\{g_1=x^3-y^2,g_2=y\}$. Clearly, the germs $f_j$ and $g_j$ are $\mathcal{K}$-Lipschitz equivalent, $j=1,2$. Then, the pizzas of $f_j$ and $g_j$ are combinatorially equivalent, $j=1,2$. It gives that $f_j$ and $g_j$ are $\mathcal{K}$-Lipschitz equivalent. The multipizza of $f$ is}
\[ 
\mathcal{M}_f=\{\{T(\gamma_1,\gamma_2)), T(\gamma_2,\gamma_3), T(\gamma_3,\gamma_4)\},\{[3,\infty],[\infty,3],[3,2],[1,\infty]\},\{q-\frac{3}{2},2q,q\},\{-,+\}\} ,
\]
where $\gamma_1(t)=(0,t), \gamma_2(t)=(t^{\frac{3}{2}},t),\gamma_3(t)=(2t^{\frac{3}{2}},t)$ and $\gamma_4(t)=(0,t)$.

\emph{The multipizza of $g$ is 
\[
\mathcal{M}_g=\{\{T(\tilde\gamma_1,\tilde\gamma_2)), T(\tilde\gamma_2,\tilde\gamma_3), T(\tilde\gamma_3,\tilde\gamma_4)\}\{[3,\infty],[\infty,3],[3,2],[\infty,1]\},\{q-\frac{3}{2},2q,q\},\{+,-\}\},
\]
where $\tilde\gamma_1(t)=(t,0), \tilde\gamma_2(t)=(t,t^{\frac{3}{2}}), \tilde\gamma_3(t)=(t,2t^{\frac{3}{2}})$ and $\tilde\gamma_4(t)=(0,t)$.
Observe that $\mathcal{M}_f$ and $\mathcal{M}_g$ has the same segments (up to a reverse orientation at last segment) and same width functions. However, $\mathcal{M}_f$ and $\mathcal{M}_g$ are not combinatorially equivalent because the supporting arcs are not compatible. Then, the multigerms $f$ and $g$ cannot be Multi-$\mathcal{K}$-Lipschitz equivalent (see the theorem bellow).}
\end{example}

\begin{theorem}
\begin{enumerate}
  \item \emph{Let $f=\{f_1,\ldots,f_n\}$ be a multigerm. Then any two minimal multipizzas of $f$ are combinatorially equivalent.
  \item Let $f,g:(\mathbb{R}^2,0)\rightarrow (\mathbb{R},0)$ be two multigerms. Then they are multi-$\mathcal{K}$-bi-Lipschitz equivalent if and only if $f$ and $g$ have combinatorially equvalent minimal multipizzas.}
\end{enumerate}

\end{theorem}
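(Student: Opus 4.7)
The proof splits into two independent statements. Part (1) I expect to handle by showing that both minimal multipizzas must be obtained by arc selections from the same canonical family of perfect zones produced in the proof of Theorem \ref{Main}. Part (2) has an easy direction (the if-direction) that is exactly the content of the previous theorem, so the only new work is the converse: from a multi-$\mathcal{K}$-Lipschitz equivalence produce combinatorially equivalent minimal multipizzas.

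For part (1), first I would argue that any minimal multipizza of $f$ induces, upon restriction to any single function $f_\nu$, a minimal pizza of $f_\nu$. By Lemma \ref{MP}, the separating arcs of such a minimal pizza must lie in the canonical perfect zones $Z^\nu_i$ associated with $f_\nu$. Hence a separating arc of the minimal multipizza must lie in the intersection of zones coming from the various functions, i.e., inside one of the zones $\tilde Z_S$ from the proof of Theorem \ref{Main}, together with the auxiliary zones $\tilde Z_{S-1,S}$ used there to enforce the supporting-arc condition. Minimality forces exactly one arc from each such canonical zone, since any additional arc could be removed while preserving the multipizza axioms. Given this, all combinatorial data -- the sequence of exponents $\beta_i$, the segments $Q_{i\nu}$, the width functions $\mu_{i\nu}$, the signs $s_{i\nu}$, and the choice of supporting arc -- are intrinsic to the zones themselves: $\beta_i$ equals the order of the enclosing zone; the segments and width functions are invariants of the zone (independent of the arc chosen inside it); the supporting arc is forced by which endpoint of $Q_{i\nu}$ realizes the maximum of $\mu_{i\nu}$ (see Remark \ref{compatibility}); and $s_{i\nu}$ is the sign of $f_\nu$ on the open triangle, which is constant there by elementarity. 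Consequently any two minimal multipizzas have identical abstract data and are combinatorially equivalent.

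For the converse in part (2), let $H$ realize the multi-$\mathcal{K}$-Lipschitz equivalence with common factor $h$. Being bi-Lipschitz, $h$ preserves orders of tangency of arcs and sends H\"older triangles to H\"older triangles of the same exponent. The defining identity $\tilde H_j((x,y),f_j(x,y))=g_j(h(x,y))$, together with the bi-Lipschitz property of $\tilde H_j$ and the condition $\tilde H_j((x,y),0)=0$, yields an estimate of the form
\[
C_1|g_j(h(x,y))|\le |f_j(x,y)|\le C_2|g_j(h(x,y))|,
\]
so that $ord_\gamma f_j = ord_{h\circ\gamma}\,g_j$ for every arc $\gamma$, and the sign of $f_j$ on the interior of a H\"older triangle agrees (up to a global $\pm1$ per function) with the sign of $g_j$ on the image triangle. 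Therefore, if $\{\tilde T_j\}$ is the H\"older-triangle decomposition underlying a minimal multipizza of $g$, then $\{h^{-1}(\tilde T_j)\}$ is a H\"older-triangle decomposition of the source plane, and transporting all the multipizza data of $g$ via $h^{-1}$ produces a multipizza of $f$ with the same abstract data. By part (1), any minimal multipizza of $f$ is combinatorially equivalent to this pullback, and hence to the minimal multipizza of $g$.

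The main obstacle is part (1): one must genuinely check that the auxiliary zones $\tilde Z_{S-1,S}$ from the proof of Theorem \ref{Main} are themselves canonical (depending only on $f$, not on the arbitrary choices of arcs $\tilde\beta_S \in \tilde Z_S$), and that minimality of a multipizza is equivalent to choosing exactly one arc from each such canonical zone. The remark following Theorem \ref{Main} -- that an intersection of perfect zones of orders $\beta_i$ and $\beta_j$ is a perfect zone of order $\max\{\beta_i,\beta_j\}$ regardless of their relative position -- is the decisive non-Archimedean ingredient that makes this canonicity possible.
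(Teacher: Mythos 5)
Your overall strategy is the paper's own: both parts are reduced to the canonical family of perfect zones built in the proof of Theorem \ref{Main}, whose combinatorial data do not depend on the choice of arcs inside them, and the converse of (2) is obtained by pulling back a minimal multipizza of $g$ along $h$, using that $h$ preserves tangency orders and exponents and that the pairs $(h,\tilde H_j)$ preserve the orders $ord_\gamma f_j$ (and signs up to a global flip). The paper's proof is in fact no more detailed than yours, and the ``main obstacle'' you isolate at the end --- canonicity of the auxiliary zones $\tilde Z_{S-1,S}$ and the equivalence between minimality and choosing exactly one arc per canonical zone --- is precisely what the paper also leaves unargued.

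One step in your part (1) is false as stated: restricting a minimal multipizza of $f$ to a single function $f_\nu$ does \emph{not} in general give a minimal pizza of $f_\nu$. The paper's Example \ref{multipizza.example} is a counterexample: the multipizza of $\{f_1,f_2,f_3\}$ uses the decomposition dictated by $f_3$, which is a proper refinement of the one-triangle minimal pizzas of $f_1$ and $f_2$, so the separating arcs $\gamma_{32},\gamma_{33}$ lie in no canonical zone of $f_1$ or $f_2$. Hence you cannot invoke Lemma \ref{MP} to place every separating arc of the multipizza inside a zone of every $f_\nu$, and the inference that each separating arc lies in an intersection of zones coming from the various functions does not follow. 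The statement the argument actually needs runs in the other direction: the multipizza decomposition refines each minimal pizza, so each canonical zone of each $f_\nu$ must contain at least one separating arc, and minimality must then be shown to force the set of separating arcs to consist of exactly one arc from each of the zones $\tilde Z_S$ and $\tilde Z_{S-1,S}$ of Theorem \ref{Main} --- which is the unproved claim you correctly flag as the crux, and which remains the gap in both your sketch and the paper's.
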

\begin{proof}
Consider the minimal pizzas of the functions $f_1,\ldots,f_n$ and apply the algorithm described in the proof of Multipizza theorem (Theorem \ref{Main}). Notice, that the zones, described in the proof are perfect zones and that is why that the class of the Multipizzas, with respect to the combinatorial equivalence, does not depend on the choice of the arcs on those zones. Any resulting Multipizza is obtained from these zones and it is minimal because the pizzas of  $f_1,\ldots,f_n$ are minimal.

\vspace{0.25cm}

Suppose that the multigerms $f$ and $g$ are multi-$\mathcal{K}$-Lipschitz equivalent by a bi-Lipschitz map $H=(h,H_1,\ldots,H_n)$. Then the bi-Lipschitz map $h$ preserves all the exponents of all the triangles of the corresponding decompositions and from the pairs given by $(h,\tilde H_j)$, the widths of all arcs are preserved. That is why it preserves the minimal multipizza.
\end{proof}

\begin{example} \emph{It holds to mention that the Multi-$\mathcal{K}$-bi-Lipschitz equivalence is a more finer equivalence than $\mathcal{K}$-bi-Lipschitz equivalence. For instance, the multigerms $f,g\colon \mathbb{R}\rightarrow \mathbb{R}$ given by $f=\{x^2,x^3\}$ and $g=\{x^2,x^5\}$ are $\mathcal{K}$-bi-Lipschitz equivalent. In fact, we can construct a bi-Lipschitz homeomorphism $H\colon \mathbb{R}^3\rightarrow \mathbb{R}^3$ sending $Graph(f)$ into $Graph(g)$ preserving its tangent line. However, it is easy to see that $f$ and $g$ are not multi-$\mathcal{K}$-bi-Lipschitz equivalent.}\end{example}


\end{document}